\pgfplotsset{width=10cm,compat=1.9}
\def\tilde{\widetilde}
\begin{document}

\author{Anna Balci}
\address{Anna Balci, Department of Mathematical Analysis, Charles University in Prague, Sokolovská 49/83, 186 75 Praha 8, Czech Republic}
\email{akhripun@math.uni-bielefeld.de}

\author{Linus Behn}
\address{Linus Behn, University of Bielefeld, Universit\"atsstr. 25, 33615 Bielefeld, Germany}
\email{linus.behn@math.uni-bielefeld.de}

\author{Lars Diening}
\address{Lars Diening, University of  Bielefeld, Universit\"atsstr. 25, 33615 Bielefeld, Germany}
\email{lars.diening@uni-bielefeld.de}

\author{Johannes Storn}
\address{Johannes Storn, Faculty of Mathematics and Computer Science, Institute of Mathematics, Leipzig University, Augustusplatz 10, 04109 Leipzig, Germany}
\email{johannes.storn@uni-leipzig.de}

\keywords{%
	p-Laplace,
	p-harmonic maps,
	vectorial,
	infinity Laplacian,
	regularity,
	Hurwitz problem
}

\subjclass[2020]{
	35J60, 
	35B65, 
	11E39, 
	11E25, 
	35J92, 
	35J94, 
	35J47, 
}

\thanks{
	This work was partially funded by the Deutsche Forschungsgemeinschaft   (DFG, German Research Foundation) - SFB 1283/2 2021 - 317210226 and IRTG 2235 (Project 282638148). Additionally, the research of Anna Kh. Balci was supported by Charles University PRIMUS/24/SCI/020 and Research Centre program No. UNCE/24/SCI/005.
}

\renewcommand{\Re}{\operatorname{Re}}
\renewcommand{\Im}{\operatorname{Im}}

\begin{abstract}
	We construct explicit examples of $p$-harmonic maps $u:\mathbb{R}^n \to \mathbb{R}^N$. These are more irregular than the previously known examples and thus provide new upper bounds for the regularity of $p$-harmonic maps, including the case of $\infty$-harmonic maps. To optimize our approach, we utilize solutions of the Hurwitz problem from algebra.
\end{abstract}

\title{Examples of $p$-harmonic maps}

\maketitle



\section{Introduction}\label{sec:introduction}

In this paper we explicitly construct examples of vectorial $p$-harmonic functions $u:\RRn \to \RRN$. Our examples are more irregular than all previously known examples, providing new insights into the limitations of regularity for general $p$-harmonic maps.

For $p\in [1,\infty)$ and $n\geq 2$, a $p$-harmonic map is a weak solution $u:\RRn \to \RRN$  to the $p$-Laplace system
\begin{align}\label{eq:plap}
	-\Delta_p u \coloneqq -\divergence (\abs{\nabla u}^{p-2} \nabla u)=0.
\end{align}
Here we use the convention that $(\nabla u)_{ij}=\partial_i u_j$  and  $\divergence$ is the column-wise divergence operator;  $\abs{\nabla u}$ is the Frobenius norm of $\nabla u$. For $p=2$, \eqref{eq:plap} is the Laplace equation. In this case every solution is smooth. For $p\neq 2$, $u$ is in general only smooth in regions where $\nabla u$ vanishes nowhere. Around \emph{critical points} $x\in \RRn$ with $\nabla u (x)=0$ we only have that $u\in C^{1,\alpha}(\RRn,\RRN)$ for some $\alpha >0$. This was shown in \cite{Uralceva68,Evans82,Lewis83} for $N=1$ and in \cite{Uhlenbeck77,Tolksdorf84,AcerbiFusco89} for $N\geq 1$. 

The precise value of $\alpha$ depends on $p$ and $n$ but it is known only in the case  $n=2$ and $N=1$, see \cite{Dobrowolski85,Aronsson88,IwaniecManfredi89}. The optimal $\alpha$ such that $u\in C^{k,\beta}(\setR^2,\setR)$ with $k+\beta =1+\alpha$, $k\in \setN_{>0}$, and $\beta \in (0,1]$ is given by
\begin{align}\label{eq:alpha2d}
	\alpha = 
	\frac{1}{6}\Bigg(1+\frac{1}{p-1}+\sqrt{1+\frac{14}{p-1}+\frac{1}{(p-1)^2}}\,\Bigg).
\end{align}

The optimal value of $\alpha$ is unknown in all other cases~$n>2$. The goal of this paper is to construct families of $p$-harmonic functions in general dimensions, which provide upper bounds for $\alpha$.

Let us take a closer look at the regularity provided by \eqref{eq:alpha2d}, see Figure~\ref{fig:thetaalpha} (left). We observe that $\alpha \to \infty$ for $p \to 1$ and $\alpha \to \frac 13$ for $p\to \infty$. If instead of $\alpha$ we plot $\tau \coloneqq \alpha /p'=\alpha \frac{p-1}{p}$, then the graph becomes part of an ellipse, see Figure~\ref{fig:thetaalpha} (right).
We will observe similar ellipses in the regularity of our examples. Directly after Theorem~\ref{thm:cond_for_h} we explain in more detail, why~$\tau$ and $\theta=\frac 1p$ are natural quantities.
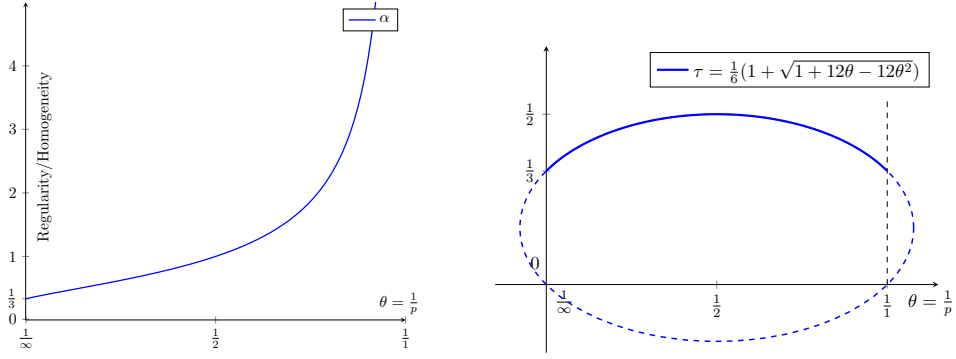
\begin{figure}[ht!]
	\label{fig:thetaalpha}
	\begin{tikzpicture}[scale=0.6]
		\begin{axis}[
			axis lines = middle,
			xmin=0,
			xmax=1,
			ymin=0,
			ymax=5,
			xtick={0.001,1/2,1},
			xticklabels={$\frac{1}{\infty}$,$\frac{1}{2}$,$\frac{1}{1}$},
			ytick={0.01,1/3,1,2,3,4},
			yticklabels={0,$\frac{1}{3}$,1,2,3,4},
			x label style = {at={(axis description cs: 1.05,0)}},
			xlabel = {\(\theta=\frac{1}{p}\)}
			]
			\node[left] at (axis cs:0,0) {0};
			\node[below right] at (axis cs:0,0) {$\frac{1}{\infty}$};
			\node[rotate=90, left] at (axis cs: 0.05,4) {Regularity/Homogeneity};
			\addplot [domain=0:0.95,samples=100,color=blue,thick]
			{((1+12*x-12*x^2)^(0.5)+1)/(6-6*x)};
			\addlegendentry{\(\alpha\)}
		\end{axis}
	\end{tikzpicture}
	\hspace{2em}
	\begin{tikzpicture}[scale=0.7]
		\begin{axis}[
			axis lines = middle,
			axis equal image,
			xmin=-0.15,
			xmax=1.15,
			ymin=-0.2,
			ymax=0.7,
			xtick={0,1/2,1},
			xticklabels={$\frac{1}{\infty}$,$\frac{1}{2}$,$\frac{1}{1}$},
			ytick={0,1/3,0.5},
			yticklabels={0,$\frac{1}{3}$,$\frac{1}{2}$},
			x label style = {at={(axis description cs: 1.05,0.1)}},
			xlabel = {\(\theta=\frac{1}{p}\)}
			]
			\node[below left] at (axis cs:0,0.1) {0};
			\node[below left] at (axis cs:0.1,0) {$\frac{1}{\infty}$};
			\draw[thick,blue,dashed] (axis cs: .5,1/6) ellipse ({sqrt(3)/3} and 1/2-1/6);
			\draw[black,dashed] (axis cs: 1,0) -- (axis cs: 1,0.55);
			\addplot [domain=0:1,samples=100,color=blue,very thick]
			{1/6*(1+sqrt(1+12*x-12*x*x))};
			\addlegendentry{\(\tau = \frac 16 (1+\sqrt{1+12\theta -12\theta ^2})\)}
		\end{axis}
	\end{tikzpicture}
	\caption{\small Optimal regularity of $p$-harmonic functions for $n=2$, $N=1$, as given in \eqref{eq:alpha2d}. Left: the regularity $\alpha$ of $\nabla u$. Right: $\tau=\alpha /p'$ forms an ellipse.}
\end{figure}

From this, we make the following observations about the case $n=2$, $N=1$:

\begin{enumerate}[label={(A\arabic{*})}]
	\setlength{\itemsep}{0.7em}
	\item \label{itm:conj_u2} The regularity of $\nabla u$ is decreasing in $p$, approaching infinity smooth functions for $p\to 1$ and $C^{1/3}$ regular functions for $p\to \infty$.
	\item \label{itm:conj_A2} The regularity of $A(\nabla u)$ is increasing in $p$, ranging from $C^{1/3}$ for $p\to 1$ to $C^{\infty}$ for $p=\infty$.
	\item \label{itm:conj_u}If $p\leq 2$, then $\nabla u\in C^1$.
	\item \label{itm:conj_A}If $p\geq 2$, then the stress $A(\nabla u)\coloneqq \abs{\nabla u}^{p-2}\nabla u$ satisfies $A(\nabla u)\in C^1$.
	\item \label{itm:conj_V}For all $p\in (1,\infty)$, the natural quantity $V(\nabla u)\coloneqq \abs{\nabla u}^{\frac{p-2}{2}}\nabla u$ satisfies $V(\nabla u)\in C^1$.
\end{enumerate}
These observations are shown again in Figure~\ref{fig:conjectures}.
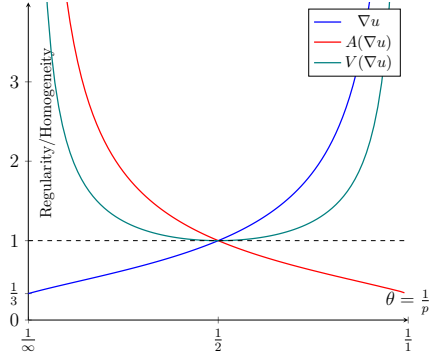
\begin{figure}[ht!]
	\begin{tikzpicture}[scale=0.6]
		\begin{axis}[axis lines = left, 
			ymin=0,
			ymax=4,
			label style={font=\Large},	
			ticklabel style={font=\Large},	
			xlabel = {\(\theta=\frac{1}{p}\)},xmin=0,xmax=1,ymin=0,
			xlabel style={at={(axis description cs:1,0)},anchor=south},	
			ytick={0,1/3,1,2,3},
			yticklabels={0,$\frac 13$,1,2,3},
			xtick={0,1/2,1},
			xticklabels={$\frac{1}{\infty}$,$\frac{1}{2}$,$\frac{1}{1}$}
			]
			\node[rotate=90, left] at (axis cs: 0.05,3.5) {Regularity/Homogeneity};
			\addplot [domain=0:0.9,samples=100,color=blue,thick]
			{((1+12*x-12*x^2)^(0.5)+1)/(6-6*x)};
			\addlegendentry{\(\nabla u\)}
			\addplot [domain=0:0.99,samples=100,color=red,thick]
			{((1+12*x-12*x^2)^(0.5)+1)/(6-6*x)*((1/x)-1)};
			\addlegendentry{\(A(\nabla u)\)}
			\addplot [domain=0:0.99,samples=100,color=teal,thick]
			{((1+12*x-12*x^2)^(0.5)+1)/(6-6*x)*(1/(2*x))};
			\addlegendentry{\(V(\nabla u)\)}
			\addplot [domain=0:1,dashed] coordinates {(0,1) (1,1)};
		\end{axis}
	\end{tikzpicture}
	\caption{\small The regularity of $\nabla u$, $A(\nabla u)=\abs{\nabla u}^{p-1}\nabla u$ and $V(\nabla u)=\abs{\nabla u}^{\frac{p-2}{2}}\nabla u$ for $n=2$ and $N=1$.}
	\label{fig:conjectures}
\end{figure}

The symmetry between the regularity of $\nabla u$ and $A(\nabla u)$ in Figure~\ref{fig:conjectures}, reflects the duality of $p$ and $p'$ harmonic functions in the plane, see \cite[Section 2]{Aronsson88}. This actually shows that observations \ref{itm:conj_u} and \ref{itm:conj_A} are equivalent. We also see that observation \ref{itm:conj_V} is the strongest and implies the other two. Note that \ref{itm:conj_A} implies that $u \in C^{p'}$ (for $p\geq 2$) which is known as the \emph{$p'$-conjecture}, see \cite{LindgrenLindqvist2017,ArujoTeixeiraUrbano2017}.

Naturally, the question arises whether these observations hold true for different values of $n$ and $N$. It was conjectured in \cite[Conjecture 2.28]{BalciDieningWeimar2020} that \ref{itm:conj_V} holds true for all $n,N$.

In this work we construct $p$-harmonic maps for all $n\geq 2$ and certain $N>1$. Their regularity depends on $p$ and $n$, but is independent of $N$. We now list for each of the observations \ref{itm:conj_u2}-\ref{itm:conj_V} above, whether our examples fulfill them:

\begin{enumerate}[label={(B\arabic{*})}]
	\setlength{\itemsep}{0.7em}
	\item \label{itm:u2} The regularity of $\nabla u$ is decreasing in $p$, ranging from $C^{1,\frac{2}{n-1}}$ for $p\to 1$ to $C^0$ for $p\to \infty$.
	\item \label{itm:A2} The regularity of $A(\nabla u)$ is increasing in $p$, ranging from $C^{0}$ for $p\to 1$ to $C^{n+1}$ for $p=\infty$.
	\item \label{itm:u} For $p\leq 2$, $\nabla u\in C^1$ holds and for every $\epsilon >0$ there exists an example such that $\nabla u \notin C^{1,\epsilon}$.
	\item \label{itm:A} For $p\geq 2$, $A(\nabla u) \in C^1$ holds for all our examples.
	\item \label{itm:V} For $p\in (1,2)$ and $n\geq 3$ we have $V(\nabla u)\notin C^1$. In particular, this disproves the conjecture from \cite{BalciDieningWeimar2020} mentioned above.
\end{enumerate}

Each of these properties is proven in Section \ref{sec:analysis}.

Let us present the concept behind our examples. They always have the structure
\begin{align*}
	u(x)=\abs{x}^{\gamma-2} h(x),
\end{align*}
where $\gamma\in \setR$ and $h:\RRn \to \RRN$ is a homogeneous polynomial of degree $2$. In Theorem~\ref{thm:cond_for_h} we find conditions on $h$ and $\gamma$ that are sufficient to make $u$ $p$-harmonic. For $h$ the conditions are
\begin{enumerate}
	\item [$(h1)$]\label{it:h1_intro} $\Delta h = 0$, 
	\item [$(h2)$]\label{it:h2_intro} $\abs{h(x)} = \abs{x}^{2}$ for all $x\in\RRn$.
\end{enumerate}
Whenever an $h$ fulfills those two conditions, we can determine a $\gamma$ such that $u$ is $p$-harmonic. The choice of $\gamma$ depends only on $p$ and $n$, but neither on $h$ nor $N$. It is given implicitly as the larger solution of the quadratic equation
\begin{align}
	\gamma^2 (p-1)+\gamma (n-p)-2n=0,
\end{align}
or explicitly as
\begin{align}
\gamma = \frac{p-n+\sqrt{(p-n)^2+8n(p-1)}}{2(p-1)}.
\end{align}
Since $u$ is $\gamma$-homogeneous, its regularity can by read off from the value of $\gamma$.

The main question to be answered now is for which combinations $(n,N)$ we can find a polynomial $h$ fulfilling \hyperref[it:h2_intro]{(h1)} and \hyperref[it:h2_intro]{(h2)}. Let us call such a pair $(n,N)$ \emph{admissible}. In Section~\ref{sec:explicit_examples}, we will show that for each $n\geq 2$, the pair $(n,n(n-1))$ is admissible. 

Note that if $(n,N)$ is admissible and $N_2>N$, then $(n,N_2)$ is admissible, too. Indeed, we can just add $N_2-N$ zero-components. This raises the question of finding for given $n$ the smallest $N$ which makes $(n,N)$ admissible. In Example~\ref{ex:small_n} we give examples of admissible pairs for small $n$. In particular, we show that $(2,2)$, $(3,5)$ and $(4,3)$ are admissible. In those three cases, the value of $N$ can not be decreased. 

In Section~\ref{sec:explicit_examples} we optimize the values of $N$, i.e., for given $n$ we seek for $N$ as small as possible, such that $(n,N)$ is admissible. This turns out to be directly related to the \emph{Hurwitz problem} about \emph{sums of squares formulas}. A sum of squares formula is an identity of the form
\begin{align}\label{eq:sos}
	(x_1^2+x_2^2+...+x_r^2)(y_1^2+y_2^2+...+y_s^2)=f_1^2+...+f_k^2.
\end{align}
In \eqref{eq:sos}, $r,s$ and $k$ are positive integers and the $f_i$ are bilinear forms in $x=(x_1,...,x_r)$ and $y=(y_1,...,y_s)$, i.e., $f_i(x,y)=\sum_{lm}a_{ilm}x_ly_m$. An elementary example of a sum of squares identity is
\begin{align*}
	(x_1^2+x_2^2)(y_1^2+y_2^2)=(x_1y_1-x_2y_2)^2+(x_1y_2+x_2y_1)^2.
\end{align*}
The Hurwitz problem asks for which values $r,s$ and $k$ such an identity can exist. Although many advancements have been made, this remains unsolved in full generality. For more information about the Hurwitz problem we refer to the book \cite{Shapiro2000Book} and the references therein. The details of how the Hurwitz problem is connected to our construction will be presented in Section~\ref{sec:explicit_examples}. We utilize this connection to get an asymptotic upper bound of $N \in O(\frac{n^2}{\log (n)})$. In the table below we present admissible pairs $(n,N)$ for selected values of $n$.

\renewcommand{\arraystretch}{1.8}
\vspace*{1em}
\begin{center}
	\begin{tabular}{|c||c|c|c|c|c|c|c|c|c|c|c|c|c|c|}
		\hline
		$n$ & $2$ & $3$ & $4$ & $5$ & $6$ & $7$ & $8$ & $10$ & $12$ & $14$ & $16$ & $n$ & even $n$ & large $n$
		\\
		\hline
		$N$ & $2$ & $5$ & $3$ & $8$ & $5$ & $12$ & $5$ & $9$ & $9$ & $9$ & $9$ & $n(n-1)$ & 1+$\frac{n^2}{4}$ & $O(\frac{n^2}{\log n})$
		\\
		\hline
	\end{tabular}
\end{center}
\vspace*{1em}

More admissible pairs can be found in Table \ref{table:admissible} below. In low, even dimensions $n$ ($n=2,\dots,20$) our example classes include vector fields $u:\RRn \to \RRn$. Of particular interest is the case $n=N=2$. As shown in Figure \eqref{fig:R2R2} (left), it is considerably less regular than the scalar example $n=2$ and $N=1$ which is known to be optimal. We have thus shown, that in $2$-d $p$-harmonic maps are in general less regular than $p$-harmonic functions.

In Figure~\ref{fig:R2R2} (right) we present the regularity of our examples for $n=2,3,4$. The case $n\geq 3$ disproves the $V \in C^1$-conjecture (observation \ref{itm:conj_V}), since the corresponding curves do not stay above the dashed line representing the $V\in C^1$-conjecture. Keep in mind that all our examples are vectorial, so that we do not make any statement about scalar $p$-harmonic functions.

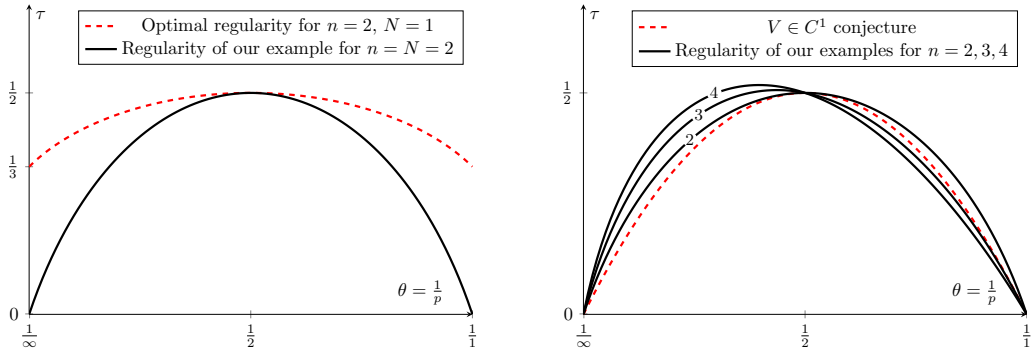
\begin{figure}[ht!]
	\begin{tikzpicture}[scale=0.7]
		\begin{axis}[
			axis lines = middle,
			axis equal image,
			xmin=-0,
			xmax=1,
			ymin=0,
			ymax=0.7,
			xtick={0.001,1/2,1},
			xticklabels={$\frac{1}{\infty}$,$\frac{1}{2}$,$\frac{1}{1}$},
			ytick={0.001,1/3,0.5},
			yticklabels={0,$\frac{1}{3}$,$\frac{1}{2}$},
			ylabel = {\(\tau\)},
			x label style = {at={(axis description cs: 0.95,0.01)}},
			xlabel = {\(\theta=\frac{1}{p}\)}
			]
			\node[below left] at (axis cs:0,0.1) {0};
			\node[below left] at (axis cs:0.1,0) {$\frac{1}{\infty}$};
			\addplot [domain=0:1,samples=100,color=red,dashed,very thick]
			{1/6*(1+sqrt(1+12*x-12*x*x))};
			\addlegendentry{Optimal regularity for $n=2$, $N=1$}
			\addplot [domain=0:1,samples=100,very thick]
			{1/2*(-1+sqrt(1+12*x-12*x*x))};
			\addlegendentry{Regularity of our example for $n=N=2$}
		\end{axis}
	\end{tikzpicture}
	\hspace{2em}
	\begin{tikzpicture}[scale=0.7]
		\begin{axis}[
			axis lines = middle,
			axis equal image,
			xmin=-0,
			xmax=1,
			ymin=0,
			ymax=0.7,
			xtick={0.001,1/2,1},
			xticklabels={$\frac{1}{\infty}$,$\frac{1}{2}$,$\frac{1}{1}$},
			ytick={0.001,0.5},
			yticklabels={0,$\frac{1}{2}$},
			ylabel = {\(\tau\)},
			x label style = {at={(axis description cs: 0.95,0.01)}},
			xlabel = {\(\theta=\frac{1}{p}\)}
			]
			\node[below left] at (axis cs:0,0.1) {0};
			\node[below left] at (axis cs:0.1,0) {$\frac{1}{\infty}$};
			\addplot [domain=0:1,samples=100,color=red,dashed,very thick]
			{2*(1-x)*x};
			\addlegendentry{$V\in C^1$ conjecture}
			\addplot [domain=0:1,samples=100,very thick]
			{1/2*(-1+sqrt(1+12*x-12*x*x))};
			\addplot [domain=0:1,samples=100,very thick]
			{1/2*(-x-1+sqrt(1+18*x-15*x*x))};
			\addplot [domain=0:1,samples=100,very thick]
			{1/2*(-2*x-1+sqrt(1+24*x-16*x*x))};
			\fill[white] (axis cs: 0.24,0.395) circle (4pt);
			\node at (axis cs: 0.24,0.395) {\small$2$};
			\fill[white] (axis cs: 0.26,0.45) circle (4pt);
			\node at (axis cs: 0.26,0.45) {\small$3$};
			\fill[white] (axis cs: 0.295,0.5) circle (4pt);
			\node at (axis cs: 0.295,0.5) {\small$4$};
			\addlegendentry{Regularity of our examples for $n=2,3,4$};
		\end{axis}
	\end{tikzpicture}
	\caption{\small Left: Regularity for $n=N=2$ expressed in $\tau=(\gamma -1)/p'$. It is more irregular than any scalar $p$-harmonic function on $\setR^2$. Right: Regularity of our examples for $n=2,3,4$. This disproves the $V\in C^1$-conjecture for $n\geq 3$.}
	\label{fig:R2R2}
\end{figure}

The rest of the paper is structured as follows. In Section \ref{sec:general_construction} we present our general construction and prove the $p$-harmonicity. In Section \ref{sec:explicit_examples} we provide explicit examples for all dimensions $n\geq 2$. Much of this section is concerned with the optimization of $N$ for given $n$. In Section \ref{sec:analysis} we prove the observations about the regularity of our examples from above. In Section \ref{sec:higher_order} we present a class of examples build around the use of higher order polynomials and in Section \ref{sec:infinity_laplacian} we discuss the case of infinity-harmonic maps.

\section{\texorpdfstring{Construction of $p$-harmonic maps}{Construction of p-harmonic maps}}\label{sec:cosntruction}

In this section we introduce new examples of $p$-harmonic maps. These maps will give us insight about the regularity of general $p$-harmonic maps. Certainly, the worst general solution can not be more regular than our maps.


\subsection{The general construction}\label{sec:general_construction}

We start with a bit of standard notation. Since we work in a vectorial setting it is particularly important to use consistent notation for matrices, maps, and gradients. Points $x\in \RRn$ are always denoted as row vectors $x=(x_1,...,x_n)$. Maps $u:\RRn \to \RRN$ are denoted as row vectors $u=(u_1,...,u_N)$, too. The gradient of $\nabla u$ of $u$ is the matrix $(\partial_i u_j)_{i,j}\in \setR ^{n\times N}$. In particular, the gradient of a scalar function is a column vector. As usual, let $L^p$ and $W^{k,p}$ denote the Lebesgue and Sobolev spaces, respectively. A function $u$ is called $\gamma$-homogeneous if $u(\lambda x)=\lambda^\gamma u(x)$ for all $\lambda > 0$.

For $a,b\in \RRn$ we write $a\cdot b = a b^T\in \setR$ for the scalar product of $a$ and $b$. For $a\in \RRn$ and $c\in \RRN$, we write $a\otimes c \coloneqq a^Tc\in \RRn \times \RRN$ for the tensor product of $a$ and $c$. For two matrices $M,N\in \setR^{n\times N}$, the Frobenius inner product is defined via $M:N \coloneqq \sum_{i,j} M_{ij}N_{ij}$. By $\abs{M}$ we always mean the Frobenius norm $\abs{M}\coloneqq \sqrt{M\!:\!M}$.

We now define weak solutions of the $p$-Laplace system for $p\in [1,\infty)$. The case $p=\infty$ is discussed in Section~\ref{sec:infinity_laplacian}.

\begin{definition}[$p$-harmonic maps]\label{def:weak_solution}
	Let $p\in [1,\infty)$ and $n,N \in \setN_{>0}$. A map $u\in W^{1,p}_{\loc}(\RRn, \RRN)$ is called \emph{$p$-harmonic} if
	\begin{align*}
		\int_\RRn \abs{\nabla u}^{p-2}\nabla u : \nabla \phi \,dx = 0,
	\end{align*}
	for every $\phi \in C_c^\infty (\RRn, \RRN)$. We call $A(\nabla u) \coloneqq \abs{\nabla u}^{p-2} \nabla u$ the stress. Finally, we define the natural quantity $V(\nabla u)\coloneqq \abs{\nabla u}^{\frac{p-2}{2}}\nabla u$.
\end{definition}
We use the ansatz
\begin{align}\label{eq:ansatz}
	u(x)=\abs{x}^{\gamma-k}h(x),
\end{align}
where $k\in \setN$, $h:\RRn \to \RRN$ is a suitable harmonic homogeneous polynomial of degree $k$. Moreover, $\gamma >0$ is chosen in dependence of $n$, $k$, and $p$.

The map $u$ is $\gamma$-homogeneous. This allows to easily read off the regularity of $u$. Homogeneity and regularity are strongly interlinked. In this paper we use the term regularity, even though strictly speaking we should refer to homogeneity. This makes a difference only in rare cases. If $\gamma -k$ is a nonnegative, even integer, then $u\in C^\infty$, although the homogeneity is $\gamma$. 

In the following theorem we establish sufficient conditions on $h$ which ensure that $u$ is $p$-harmonic. This enables us to construct new examples of $p$-harmonic maps for all $n\geq 2$. The existence of such $h$ will be investigated in detail in Section \ref{sec:explicit_examples}.

\begin{theorem}\label{thm:cond_for_h}
	Let $n\geq 2$ and $h:\RRn \to \RRN$ be a homogeneous polynomial of degree $k\in\setN _{>0}$ fulfilling the following two conditions
	\begin{enumerate}
		\item [$(h1)$]\label{it:h1} $\Delta h = 0$,
		\item [$(h2)$]\label{it:h2} $\abs{h(x)} = \abs{x}^{k}$ for all $x\in\RRn$.
	\end{enumerate}
	Let $p\in [1,\infty)$ and let $\gamma=\gamma(p,k,n)\in \setR$ be defined as follows. For $p=1$, let $\gamma =\frac{k(k+n-2)}{n-1}$. For $p\in(1,\infty)$, let $\gamma$ be the larger root of the quadratic polynomial
	\begin{align}\label{eq:gamma}
		\gamma^2 (p-1)+\gamma (n-p)-k(k+n-2),
	\end{align}
	i.e.,
	\begin{align}
    \label{eq:gamma2}
		\gamma = \frac{p-n+\sqrt{(p-n)^2+4(p-1)k(k+n-2)}}{2(p-1)}.
	\end{align}
	Then $u(x)\coloneqq\abs{x}^{\gamma-k}h(x)$ is a $p$-harmonic map. Moreover, $A(\nabla u) \in L^\infty_{\loc}(\RRn) \cap W^{1,1}_{\loc}(\RRn)$.
\end{theorem}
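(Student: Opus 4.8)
The plan is to verify the system pointwise on $\RRn\setminus\{0\}$, where $u$ is smooth, and then promote this to the weak formulation of Definition~\ref{def:weak_solution} by an exhaustion argument near the origin. Writing $r=\abs{x}$, differentiating the ansatz gives
\begin{align*}
	\nabla u = (\gamma-k)\,r^{\gamma-k-2}\,(x\otimes h) + r^{\gamma-k}\,\nabla h.
\end{align*}
The first task is to simplify $\abs{\nabla u}$, and this is where both hypotheses enter decisively. Euler's relation for the $k$-homogeneous $h$ gives $x\cdot\nabla h_j=k\,h_j$, hence $(x\otimes h):\nabla h = k\abs{h}^2 = k\,r^{2k}$ by $(h2)$. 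Differentiating the identity $\abs{h}^2=r^{2k}$ once yields $\sum_j h_j\partial_i h_j=k\,r^{2k-2}x_i$; differentiating a second time and summing over $i$, now using $\Delta h=0$ from $(h1)$ to annihilate $\sum_j h_j\Delta h_j$, produces the crucial identity $\abs{\nabla h}^2 = k(2k+n-2)\,r^{2k-2}$. Substituting these three evaluations into $\abs{\nabla u}^2=\nabla u:\nabla u$ collapses every cross term, leaving
\begin{align*}
	\abs{\nabla u}^2 = \big(\gamma^2 + k(k+n-2)\big)\,r^{2\gamma-2}.
\end{align*}
Thus $\abs{\nabla u}$ is a positive constant $C=\sqrt{\gamma^2+k(k+n-2)}$ times $r^{\gamma-1}$, which is the structural fact that makes the stress explicitly computable. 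I expect this second differentiation of $(h2)$ to be the genuine conceptual obstacle: it is the single place where both conditions conspire, and getting $\abs{\nabla h}^2$ exactly right is what forces $\abs{\nabla u}$ to be radial.

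With $A(\nabla u)=C^{p-2}\,r^{(\gamma-1)(p-2)}\big[(\gamma-k)\,r^{\gamma-k-2}(x\otimes h)+r^{\gamma-k}\nabla h\big]$, I would compute the column-wise divergence of the two terms separately. Using $\sum_i\partial_i x_i=n$, $\partial_i r^m=m\,r^{m-2}x_i$ and Euler's relation, one finds $\divergence\big(r^m(x\otimes h)\big)=(m+n+k)\,r^m h$, while $\divergence\big(r^{m'}\nabla h\big)=m'k\,r^{m'-2}h$, the Laplacian term dropping out by $(h1)$. The two exponents satisfy $m'-2=m$, so both contributions are proportional to the same $r^m h$. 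Collecting the scalar prefactor and substituting $m=(\gamma-1)(p-2)+\gamma-k-2$ and $m'=m+2$, the bracket simplifies exactly to
\begin{align*}
	\gamma^2(p-1)+\gamma(n-p)-k(k+n-2),
\end{align*}
which vanishes by the definition \eqref{eq:gamma} of $\gamma$. Hence $\divergence A(\nabla u)=0$ pointwise on $\RRn\setminus\{0\}$. The degenerate case $p=1$ is covered by the identical computation with $\abs{\nabla u}^{-1}\nabla u$ in place of the stress; the quadratic then degenerates to the linear relation $\gamma(n-1)=k(k+n-2)$ defining the stated value of $\gamma$. Here the only delicate point is the exactness of the exponent bookkeeping, since the quadratic \eqref{eq:gamma} must emerge on the nose.

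It remains to establish the integrability claims and upgrade the pointwise identity to the weak one; this is the analytic obstacle at the origin. First, $\gamma\ge 1$: the quadratic in \eqref{eq:gamma} has negative constant term $-k(k+n-2)$ and is nonpositive at $\gamma=1$ for every $k\ge 1$, $n\ge 2$, so its larger root satisfies $\gamma\ge 1$. Consequently $\abs{A(\nabla u)}=C^{p-1}r^{(\gamma-1)(p-1)}$ is bounded near the origin, giving $A(\nabla u)\in L^\infty_{\loc}(\RRn)$, while its classical gradient is of size $r^{(\gamma-1)(p-1)-1}$, which lies in $L^1_{\loc}$ since $(\gamma-1)(p-1)+n-1>0$. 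Because $A(\nabla u)$ is bounded and $n\ge 2$, a removable-singularity argument (cut out $B_\epsilon$, integrate by parts, and let $\epsilon\to0$, the boundary term being $O(\epsilon^{n-1})$) shows that the weak gradient equals the classical one, whence $A(\nabla u)\in W^{1,1}_{\loc}(\RRn)$. Weak $p$-harmonicity is then immediate: for $\phi\in C_c^\infty(\RRn,\RRN)$, integrating by parts against the $W^{1,1}_{\loc}$ stress gives $\int_{\RRn}A(\nabla u):\nabla\phi\,dx=-\int_{\RRn}(\divergence A(\nabla u))\cdot\phi\,dx=0$, since $\divergence A(\nabla u)=0$ almost everywhere. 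Finally $u\in W^{1,p}_{\loc}(\RRn,\RRN)$ follows from $\abs{\nabla u}=C\,r^{\gamma-1}\in L^p_{\loc}(\RRn)$, completing the proof.
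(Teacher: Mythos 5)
Your proposal is correct and follows essentially the same route as the paper's proof: derive $\abs{\nabla h}^2 = k(2k+n-2)\abs{x}^{2k-2}$ and Euler's relation from $(h1)$--$(h2)$, compute $\abs{\nabla u}$ to be a constant times $\abs{x}^{\gamma-1}$, verify $\divergence A(\nabla u)=0$ pointwise on $\RRn\setminus\{0\}$ via the same exponent bookkeeping yielding the quadratic \eqref{eq:gamma}, and then pass to the weak formulation using $A(\nabla u)\in L^\infty_{\loc}\cap W^{1,1}_{\loc}$. Your explicit removable-singularity argument at the origin (cutting out $B_\epsilon$ and controlling the $O(\epsilon^{n-1})$ boundary term) is just a spelled-out version of what the paper asserts from homogeneity, so the two proofs coincide in substance.
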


Before we proceed with the proof, we make a few short remarks about the value $\gamma$. First note that $\lim _{p\to 1} \gamma (p,k,n)= \gamma(1,k,n)$ and $\lim_{p\to \infty} \gamma (p,k,n)=1$. Furthermore, $\gamma$ is strictly monotone in $k$. An easy calculation shows that $\gamma(p,1,n)=1$. Thus, we always have $\gamma \geq 1$. If $k=1$, then $u=h$, where $h$ is a linear polynomial. Therefore, the first interesting case is $k=2$.

The regularity of~$u$ can be expressed more elegantly in other quantities. Indeed, if we define
\begin{align}\
  \label{eq:def-tau-theta}
  \tau &\coloneqq \frac{\gamma-1}{p'} = (1-\theta)(\gamma-1)
  \qquad \text{and} \quad   \theta \coloneqq \frac{1}{p},
\end{align}
then we can rewrite~\eqref{eq:gamma} in terms of~$\tau$ and $\theta$ as follows.
\begin{align}
  \label{eq:tau}
  \tau ^2 + \tau (1+\theta (n-2))+\theta(1-\theta)(n-1-k(k+n-2))=0.
\end{align}
In the important case $k=2$ we obtain
\begin{align}
  \label{eq:tau-k=2}
  \tau ^2 + \tau (1+\theta (n-2))-\theta(1-\theta)(n+1)=0.
\end{align}
The solution set to this equation is just an ellipse, see for example Figure~\ref{fig:thetaalpha} (right). From this ellipse we always need the upper part restricted to the interval $\theta \in [0,1]$. Let us explain, why the quantities $\tau$ and $\theta$ are natural. First of all, $\gamma-1$ is more natural than~$\gamma$. It is the homogeneity of~$\nabla y$ and $\nabla u$ appears naturally in the equation. Now, $\tau$ scales $\gamma-1$ by the factor~$\frac{1}{p'}$. This is very natural in the context of $n=2$ and $N=1$. In fact any $p$-harmonic functions in the plane induces another $p'$-harmonic function constructed by means of the stream function as done in~\cite{AronssonLindqvist88,Aronsson88}. This roughly speaking corresponds to switching $\nabla u$ with $A(\nabla u)$ and $p$ with $p'$. Now, the quantity $\tau=\frac{\gamma-1}{p'}$ is invariant under this transformation, since $\frac{(\gamma-1)(p-1)}{p} = \frac{\gamma-1}{p'}=\tau$. This symmetry is the reason, why in Figure~\ref{fig:thetaalpha} the graph is symmetric with respect to~$\theta = \frac 12$.

\begin{proof}[Proof of Theorem~\ref{thm:cond_for_h}]
  We first claim that \hyperref[it:h1]{(h1)} and \hyperref[it:h2]{(h2)} imply
	\begin{enumerate}
		\item [$(h3)$]\label{it:h3} $\abs{\nabla h(x)}=\sqrt{kn+k(2k-2)}\abs{x}^{k-1}$ and
		\item [$(h4)$]\label{it:h4} $x\nabla h(x)=kh(x)$.
	\end{enumerate}
	To see this, we note that by \hyperref[it:h2]{(h2)} and \hyperref[it:h1]{(h1)},
	\begin{align*}
		\Delta \abs{x}^{2k}=\Delta \abs{h}^2 = 2\skp{\Delta h}{h}+ 2 \abs{\nabla h}^2 = 2\abs{\nabla h}^2,
	\end{align*}
	and, by a basic computation,
	\begin{align*}
		\Delta \abs{x}^{2k}=(2kn+2k(2k-2))\abs{x}^{2k-2}.
	\end{align*}
	Thus, \hyperref[it:h3]{(h3)} follows. To show \hyperref[it:h4]{(h4)}, we fix $j\in \set{1,\dots,N}$. The function $h_j$ can be written as $h_j(x)=\sum_{\abs{\alpha}=k}\lambda_\alpha x^\alpha$, where the sum is taken over multi-indices of length $k$. Then
	\begin{align*}
		(x\nabla h(x))_j &= \sum_{i=1}^n x_i \partial_i (h_j(x))=\sum_{i=1}^n \sum_{\abs{\alpha}=k} \lambda_\alpha x_i \alpha_i x^{\alpha -e_i} = k h_j(x). 
	\end{align*}
	This shows \hyperref[it:h4]{(h4)} (which even holds for all homogeneous polynomials of degree $k$).

	Clearly, $u$ is smooth in $\RRn \setminus \set{0}$.
  Fixing $x\neq 0$, we now show that $\Delta_p u(x)=0$. We have
	\begin{align}
    \label{eq:grad-u}
		\nabla u(x)=(\gamma - k)\abs{x}^{\gamma - k -2}x\otimes h(x)+\abs{x}^{\gamma -k}\nabla h(x).
	\end{align}
	Since $\abs{x\otimes h(x)}^2=\abs{x}^2\abs{h(x)}^2$ and by \hyperref[it:h4]{(h2)}, \hyperref[it:h4]{(h3)} and \hyperref[it:h4]{(h4)} we get
	\begin{align}\label{eq:abs-nabla-u}
		\begin{aligned}
			\abs{\nabla u(x)}^2&= \abs{x}^{2\gamma-2k-2}\big((\gamma - k)^2 \abs{h(x)}^2+\abs{x}^{2}\abs{\nabla h(x)}^2 
			\\
			&\hspace{6.4em}+ 2(\gamma -k)x\nabla h(x)h(x)^T\big)
			\\
			&=\abs{x}^{2(\gamma -1)}\big( (\gamma - k)^2 + (kn+k(2k-2))+2(\gamma -k)k\big)
			\\
			&=\abs{x}^{2(\gamma -1)}(\gamma ^2 + k(k+n-2)).
		\end{aligned}
	\end{align}
	Setting $c\coloneqq \gamma ^2 + k(k+n-2)$ and $r\coloneqq \gamma p-p-\gamma -k$, we calculate
	\begin{align}\label{eq:pLap-pointwise}
		\begin{aligned}
			c^{\frac{2-p}{2}}\Delta_p u(x)&=\divergence\big(\abs{x}^{(\gamma -1)(p-2)}\nabla u(x)\big)
			\\
			&=\divergence\big((\gamma -k)\abs{x}^{(\gamma -1)(p-2)+\gamma-k-2}x\otimes h(x)
			\\
			&\hspace{3.2em}+\abs{x}^{(\gamma-1)(p-2)+\gamma-k}\nabla h(x)\big)
			\\
			&=(\gamma - k)(\nabla \abs{x}^r)^Tx\otimes h(x) + (\gamma -k)\abs{x}^r\divergence(x\otimes h(x))
			\\
			&\quad + (\nabla \abs{x}^{r+2})^T\nabla h(x)
			\\
			&\eqqcolon \mathrm{I}+\mathrm{II}+\mathrm{III}.
		\end{aligned}
	\end{align}
	Here we already used $\Delta h = 0$. We have
	\begin{align*}
		\mathrm{I}=(\gamma - k)r\abs{x}^{r-2}x\,x^Th(x)=(\gamma -k)r\abs{x}^{r}h(x),
	\end{align*}
	and
	\begin{align*}
		\mathrm{II}=(\gamma -k) \abs{x}^{r}(nh(x)+x\nabla h(x))=(\gamma -k)(n+k) \abs{x}^{r}h(x),
	\end{align*}
	where in the last equality we used \hyperref[it:h4]{(h4)}. Finally, using \hyperref[it:h4]{(h4)} again, we have
	\begin{align*}
		\mathrm{III}=(r+2)\abs{x}^{r}x\nabla h(x)=k(r+2)\abs{x}^{r}h(x).
	\end{align*}
	Inserting our findings into \eqref{eq:pLap-pointwise} we get
	\begin{align}
    \label{eq:locally-p-harmonic}
    \begin{aligned}
      \Delta_p u(x)&=c^{\frac{p-2}{2}} \left((\gamma-k)(r+n+k)+k(r+2)\right)\abs{x}^{r}h(x)
      \\
      &=c^{\frac{p-2}{2}}\left( \gamma ^2(p-1)+\gamma (n-p)-k(n+k-2)\right)\abs{x}^rh(x)
      \\
      &=0,
    \end{aligned}
	\end{align}
  where we have used in the last step the choice of $\gamma$ in \eqref{eq:gamma} . We have thus shown the $u$ is pointwise $p$-harmonic for all $x\in \RRn \setminus \set{0}$.

  Since $u$ is $\gamma$-homogeneous with~$\gamma\geq 1$, we have $u \in W^{1,\infty}_{\loc}(\RRn)$ and its weak derivative is given by~\eqref{eq:grad-u}. Moreover, $A(\nabla u)$ is $(\gamma-1)(p-1)$-homogeneous and $(\gamma-1)(p-1)\geq 0$. Thus, $A(\nabla u) \in L^\infty_{\loc}(\RRn) \cap W^{1,1}_{\loc}(\RRn)$. By~\eqref{eq:locally-p-harmonic} we have $\divergence A(\nabla u)$ on $\RRn \setminus \set{0}$. This and $A(\nabla u) \in W^{1,1}_{\loc}(\RRn)$ proves $\Delta_p u = \divergence A(\nabla u) =0$ in the weak sense. This finishes the proof.
\end{proof}


\subsection{Explicit examples}\label{sec:explicit_examples}

In this section we present explicit examples of homogeneous polynomials $h$ fulfilling the conditions of Theorem~\ref{thm:cond_for_h}. These $h$, combined with Theorem~\ref{thm:cond_for_h} directly give examples of $p$-harmonic maps. In this section we focus on the case $k=2$, because the $h$ with the lowest degree will yield the least regular $u$. The case $k=1$ only produces linear solutions $u$. The case $k>2$ is discussed in Section~\ref{sec:higher_order}.

For $n=N=2$, such an $h$ is given by
\begin{align}
  \label{eq:n=N=2}
	h(x)= \Bigg(
	\begin{matrix}
		x_1^2-x_2^2
		\\
		2x_1x_2
	\end{matrix}
	\Bigg).
\end{align}

We are now interested in the case $n\geq 2$. In particular, we want to find small values of $N$ that allow for a construction of $h$.

\begin{definition}\label{def:admissible_pair}
	A pair $(n,N)$ of positive integers with $n\geq 2$ is called \emph{admissible}, if there exists a homogeneous polynomial $h:\RRn \to \RRN$ of degree $2$ such that the conditions from Theorem~\ref{thm:cond_for_h} are fulfilled, i.e.,
	\begin{itemize}
		\item $\Delta h =0$ and
		\item $\abs{h(x)}=\abs{x}^2$ for all $x\in \RRn$.
	\end{itemize}
	By $N_{\min}(n)$, we denote the smallest $N$ such that $(n,N)$ is admissible.
\end{definition}

Not all pairs $(n,N)$ are admissible. For example, it is easy to see that a pair of the form $(n,1)$ is never admissible. Admissibility is a monotone property in the sense that if $(n,N)$ is admissible, then $(n,N')$ is admissible for every $N'>N$, too. This can be achieved by adding $(N'-N)$ zero-components to $h$. Such a monotonicity does not hold in the first component $n$.

For each $n$ we have $N_{\min} (n)\leq n(n-1)$, as the following construction shows. In particular, for each $n$ there exists an $N$ that make $(n,N)$ admissible. Let $h:\RRn \to \setR^{n(n-1)}$ be given by
\begin{align}
  \label{eq:h-simple-all}
  h &=
  \begin{pmatrix}
    x_i^2 - x_j^2 \quad \text{for $i<j$}
    \\
    2x_i x_j \quad \text{for $i<j$}
  \end{pmatrix}.
\end{align}
Then $h$ is harmonic and $\abs{h}^2 = \abs{x}^4$ as desired in Theorem~\ref{thm:cond_for_h}. The case $n=2$ recovers~\eqref{eq:n=N=2}. Moreover, we see that $N_{\min}(3) \leq 6$ and $N_{\min}(4) \leq 12$.

We now show how for even $n$ the construction can be improved with a smaller~$N$ such that $(n,N)$ is admissible. Indeed, for $n=2m$ let $h:\RRn \to \setR^{1+\frac{n^2}{4}}$ given by
\begin{align}
  \label{eq:h-simple-even}
  h(x)=
  \begin{pmatrix}
    (x_1^2+ \dots x_m^2) - (x_{m+1}^2 + \dots +x_{2m}^2)
    \\
    2x_i x_j \quad \text{for $1\leq i\leq m$, $m+1 \leq j \leq 2m$.}
  \end{pmatrix}
\end{align}
Then $h$ fulfills the conditions of Theorem~\ref{thm:cond_for_h}, so $(n,1+\frac{n^2}{4})$ is admissible for even~$n$. We obtain $N_{\min}(4) \leq 5$, which is an improvement to~\eqref{eq:h-simple-all}.

However, for $n=4$ it is possible to find even smaller $N$. In particular, $(4,3)$ is admissible using the function $h\,:\, \RR^4 \to \RR^3$ given by
\begin{align}
  \label{eq:n=4N=3}
  h(x)=
  \begin{pmatrix}
    x_1^2 + x_2^2 -x_3^2 - x_4^2
    \\
    2x_1x_3+2x_2x_4
    \\
    2x_1x_4-2x_2x_3
  \end{pmatrix}
  .
\end{align}
This shows that $N_{\min}(4) \leq 3$.

In order to construct suitable~$h\,:\, \RRn \to \RRN$ for small values of~$N$ we need some tools from algebra. In particular, we need \emph{compositions of quadratic forms} also known as the \emph{Hurwitz problem}. We refer to the book~\cite{Shapiro2000Book} for an extensive survey on this topic. The following notation and statements are from this book.

The Hurwitz problem is the following: For $r,s \in \setN_{>0}$ find $t \in \setN_{>0}$ and a bilinear map $F\,:\, \RR^r \times \RR^s \to \RR^t$ such that
\begin{align}
  \label{eq:hurwitz}
  \abs{F(x,y)} = \abs{x} \abs{y}.
\end{align}
If such $F$ exsists, then we say that triple $[r,s,t]$ is Hurwitz-admissible. By $t_{\min}(r,s)$ we denote the smallest~$t$ such that $[r,s,t]$ is Hurwitz-admissible. By $F_{r,s}$ we denote any bilinear map associated to $[r,s,t_{\min}(r,s)]$ satisfying~\eqref{eq:hurwitz}.

We will use~\eqref{eq:hurwitz} in the form
\begin{align}
  \label{eq:hurwitz2}
  \big(x_1^2 + \dots +x_r^2\big)^2 \big(y_1^2 + \dots + y_r^2\big)^2 = \big(F_1(x,y)\big)^2 + \dots + \big(F_t(x,y)\big)^2.
\end{align}

Using the Hurwitz-problem, we can now construct more examples of~$h$ as given in Theorem~\ref{thm:cond_for_h}.
\begin{theorem}
  \label{thm:N-hurwitz-even}
  Let $n =2m\in \setN_{>0}$ be even. Then $N_{\min} \leq 1 + t_{\min}(m,m)$ and
  \begin{align*}
    h(x) &=
    \begin{pmatrix}
      (x_1^2+ \dots x_m^2) - (x_{m+1}^2 + \dots +x_{2m}^2)
      \\
      2F_{m,m}\big((x_1,\dots,x_m),(x_{m+1},\dots, x_{2m})\big)
    \end{pmatrix}
  \end{align*}
  satisfies the assumption of Theorem~\ref{thm:cond_for_h}.
\end{theorem}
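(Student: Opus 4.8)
The plan is to verify directly that the stated $h$ fulfills conditions $(h1)$ and $(h2)$ of Theorem~\ref{thm:cond_for_h}; together with Definition~\ref{def:admissible_pair} this shows that the pair $(n, 1 + t_{\min}(m,m))$ is admissible and hence $N_{\min}(n) \le 1 + t_{\min}(m,m)$. Throughout I would split the variable as $x = (a,b)$ with $a = (x_1,\dots,x_m)$ and $b = (x_{m+1},\dots,x_{2m})$, so that the first component of $h$ reads $h_0 = \abs{a}^2 - \abs{b}^2$ and the remaining $t_{\min}(m,m)$ components are the entries of $2F_{m,m}(a,b)$.

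For $(h1)$ I would check harmonicity componentwise. The first component is $\abs{a}^2 - \abs{b}^2$, whose Laplacian is $2m - 2m = 0$ by a one-line computation. Each of the remaining components is an entry of $F_{m,m}(a,b)$, which is a bilinear form $\sum_{i,j} c_{ij} a_i b_j$; being separately linear in each coordinate $x_p$, it has vanishing pure second derivative $\partial_{x_p}^2$ for every $p$, so its Laplacian is zero. Summing over the components yields $\Delta h = 0$.

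For $(h2)$ the key input is the defining Hurwitz identity $\abs{F_{m,m}(a,b)} = \abs{a}\,\abs{b}$ from~\eqref{eq:hurwitz}. Then
\[
\abs{h(x)}^2 = (\abs{a}^2 - \abs{b}^2)^2 + 4\abs{F_{m,m}(a,b)}^2 = (\abs{a}^2 - \abs{b}^2)^2 + 4\abs{a}^2\abs{b}^2 = (\abs{a}^2 + \abs{b}^2)^2 = \abs{x}^4,
\]
so that $\abs{h(x)} = \abs{x}^2$ as required.

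Since the construction already encodes the only nontrivial ingredient, namely the bilinear map $F_{m,m}$ supplied by a minimal solution of the Hurwitz problem, I do not expect any genuine obstacle in the verification itself; its entire content is the algebraic identity $(\abs{a}^2-\abs{b}^2)^2 + 4\abs{a}^2\abs{b}^2 = (\abs{a}^2+\abs{b}^2)^2$ combined with the Hurwitz norm relation. The one point that warrants care is the harmonicity of the $F$-block: it is essential that $F_{m,m}$ is \emph{bilinear} rather than merely quadratic, since this is exactly what kills the pure second derivatives and makes each component harmonic. This is the structural reason the ``mixed'' splitting into $a$- and $b$-variables is used, rather than a single sum-of-squares decomposition of $\abs{x}^2$.
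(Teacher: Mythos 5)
Your proof is correct and follows essentially the same route as the paper: the same splitting $x=(y,z)$, the identity $\abs{x}^4 = (\abs{y}^2-\abs{z}^2)^2 + 4\abs{y}^2\abs{z}^2$, and the Hurwitz relation $\abs{F_{m,m}(y,z)}=\abs{y}\,\abs{z}$ to convert the cross term into a sum of squares of harmonic bilinear forms. The only difference is that you spell out the harmonicity of the bilinear components (vanishing pure second derivatives), which the paper asserts without comment.
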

\begin{proof}
  We decompose $x$ into $x= (y,z)$ with $y=(x_1,\dots, x_m)$ and $z=(x_{m+1},\dots, x_{2m})$. Then
  \begin{align*}
    \abs{x}^4 &= \abs{(y,z)}^4 = \big(\abs{y}^2 -\abs{y}^2\big)^2 + 4 \abs{y}^2\abs{z}^2
    \\
    &= \big((y_1^2+ \dots y_m^2) - (z_1^2 + \dots +z_m^2)\big)^2 + \big(2F_{m,m}(y,z)\big)^2.
  \end{align*}
  Now, the claim follows, since $(y_1^2+ \dots y_m^2) - (z_1^2 + \dots +z_m^2)$ and the components of $F_{m,m}(y,z)$ are harmonic.
\end{proof}
The case that $n=2m+1$ is odd is more challenging. However, it can be reduced to the case $n-1=2m$ as the following Lemma shows.
\begin{theorem}
  \label{thm:N-hurwitz-odd}
  Let $n =2m+1\in \setN_{>0}$ be odd. Then $N_{\min}(n) \leq 1+t_{\min}\big(\frac{n-1}{2},\frac{n-1}{2}\big)+n$ and
  \begin{align*}
    h(x) &= \frac{1}{2m}
    \begin{pmatrix}
      \sqrt{4m^2-1} \big((x_1^2+ \dots x_m^2) - (x_{m+1}^2 + \dots +x_{2m}^2)\big)
      \\
      (x_1^2 + \dots + x_{2m}^2) - 2m x_n^2
      \\
      2\sqrt{4m^2-1}\,F_{m,m}\big((x_1,\dots,x_m),(x_{m+1},\dots, x_{2m})\big)
      \\
      2\sqrt{2m^2+m}\, x_ix_n \quad \text{for $i=1,\dots, 2m$}.
    \end{pmatrix}
  \end{align*}
  satisfies the assumption of Theorem~\ref{thm:cond_for_h}.
\end{theorem}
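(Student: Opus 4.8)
The plan is to verify directly that the displayed map $h$ satisfies conditions $(h1)$ and $(h2)$ of Theorem~\ref{thm:cond_for_h}; the bound on $N_{\min}(n)$ then falls out by counting the components. First I would introduce the splitting $x=(y,z,w)$ with $y=(x_1,\dots,x_m)$ and $z=(x_{m+1},\dots,x_{2m})$ in $\setR^m$ and $w=x_n$, and abbreviate $P\coloneqq \abs{y}^2+\abs{z}^2 = x_1^2+\dots+x_{2m}^2$, so that $\abs{x}^2 = P + w^2$. With this notation the four blocks of $h$ are (up to the prefactor $\tfrac{1}{2m}$) the scalar $\sqrt{4m^2-1}\,(\abs{y}^2-\abs{z}^2)$, the scalar $P-2m\,w^2$, the vector $2\sqrt{4m^2-1}\,F_{m,m}(y,z)$, and the $2m$ entries $2\sqrt{2m^2+m}\,x_iw$.

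For $(h1)$ I would check harmonicity block by block. Each entry is a homogeneous quadratic of one of two types: a combination of squares such as $\abs{y}^2-\abs{z}^2$ (with $\Delta=2m-2m=0$) or $P-2m\,w^2$ (with $\Delta = 4m - 4m = 0$), and pure cross terms $x_ix_j$ with $i\neq j$, whose Laplacian is always zero. Since the entries of $F_{m,m}(y,z)$ are bilinear in $y$ and $z$, they are sums of such cross terms, and the entries $x_iw$ with $i\le 2m$ are cross terms as well. Hence $\Delta h=0$.

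The heart of the proof is $(h2)$, which I would establish by computing $(2m)^2\abs{h}^2$ as the sum of the squares of the four unnormalized blocks. Using the Hurwitz identity $\abs{F_{m,m}(y,z)}^2=\abs{y}^2\abs{z}^2$ from~\eqref{eq:hurwitz}, the first and third blocks combine through the elementary identity $(a-b)^2+4ab=(a+b)^2$ into
\[
  (4m^2-1)\big[(\abs{y}^2-\abs{z}^2)^2+4\abs{y}^2\abs{z}^2\big]=(4m^2-1)\big(\abs{y}^2+\abs{z}^2\big)^2=(4m^2-1)P^2 .
\]
The second block contributes $(P-2m\,w^2)^2 = P^2-4m\,w^2P+4m^2w^4$, and the fourth contributes $4(2m^2+m)\,w^2\sum_{i=1}^{2m}x_i^2 = 4(2m^2+m)\,w^2P$. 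Collecting powers of $w$, the coefficient of $P^2$ becomes $(4m^2-1)+1=4m^2$, that of $w^2P$ becomes $-4m+4(2m^2+m)=8m^2$, and that of $w^4$ is $4m^2$, so the total equals $4m^2(P^2+2w^2P+w^4)=4m^2(P+w^2)^2=4m^2\abs{x}^4$. Dividing by $(2m)^2$ yields $\abs{h}^2=\abs{x}^4$, which is $(h2)$.

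Finally, counting components gives $1+1+t_{\min}(m,m)+2m=1+t_{\min}(m,m)+n$, so this $h$ exhibits the claimed bound on $N_{\min}(n)$. The only genuinely delicate point—and hence the place demanding care—is that the three constants $\sqrt{4m^2-1}$, $\sqrt{2m^2+m}$ and the normalization $\tfrac{1}{2m}$ are exactly those needed for all the $w$-dependent cross terms to reassemble into the perfect square $(P+w^2)^2$; I expect the main effort to be this coefficient bookkeeping rather than any conceptual difficulty, since the Hurwitz identity already absorbs the entire $y$--$z$ interaction.
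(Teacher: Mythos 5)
Your proposal is correct and takes essentially the same route as the paper: both verify $(h1)$ and $(h2)$ directly for the displayed $h$, using the decomposition $x=(y,z,x_n)$ and the Hurwitz identity $\abs{F_{m,m}(y,z)}^2=\abs{y}^2\abs{z}^2$ to absorb the $y$--$z$ interaction, and then count components. Your grouping of the first and third blocks via $(a-b)^2+4ab=(a+b)^2$ to get $(4m^2-1)P^2$ is just a slightly cleaner arrangement of the same coefficient bookkeeping that the paper carries out by expanding $A^2$ and $B^2$ separately.
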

\begin{proof}
  We decompose $x$ into $x= (y,z,x_n)$ with $y=(x_1,\dots, x_m)$ and $z=(x_{m+1},\dots, x_{2m})$. Then $\abs{y}^2 - \abs{z}^2$ and $\abs{(y,z)}^2 - 2m x_n^2$ are harmonic. Moreover,
  \begin{align*}
    A^2 &\coloneqq \big(\abs{y}^2-\abs{z}\big)^2
    = \abs{y}^4 - 2 \abs{y}^2\abs{z}^2 +\abs{z}^4
    \\
    B^2 &\coloneqq \big( \abs{(y,z)}^2 - 2m x_n^2\big)^2
    = \abs{y}^4 + \abs{z}^4 + 4m^2 \abs{x_n}^4 +2 \abs{y}^2\abs{z}^2 - 4m \abs{(y,z)}^2x_n^2.
  \end{align*}
  Thus,
  \begin{align*}
    (4m^2-1) A^2 + B^2
    &= 4m^2 \big(\abs{y}^4 + \abs{z}^4 + x_n^4) + (-2(4m^2-1)+2)\abs{y}^2\abs{z}^2   - 4m \abs{(y,z)}^2x_n^2
    \\
    &= 4m^2 \big(\abs{y}^4 + \abs{z}^4 + x_n^4\big) -(8m^2-4)\abs{y}^2\abs{z}^2 - 4m \abs{(y,z)}^2x_n^2.
  \end{align*}
  Hence,
  \begin{align*}
    4m^2 \abs{x}^4 &= 4m^2\abs{(y,z,x_n)}^4
    \\
    &= (4m^2-1)A^2 + B^2 +(16m^2-4) \abs{y}^2 \abs{z}^2 +(8m^2+4m)  \abs{(y,z)}^2x_n^2
    \\
    &= (4m^2-1)A^2 + B^2 +(16m^2-4)\big(F_{m,m}(y,z)\big)^2 + (8m^2+4m) \Big( \sum_{i=1}^m y_i^2x_n^2 + \sum_{i=1}^m z_i^2x_n^2\Big).
  \end{align*}
  This and the fact that $F_{m,m}(y,z)$ and the $y_i^2x_n^2$ and $z_i^2x_n^2$ are harmonic prove the claim.
\end{proof}

Combining Theorem \ref{thm:N-hurwitz-even} and Theorem \ref{thm:N-hurwitz-odd} with the known upper bounds for $t_{\min}(n,n)$ (see \cite{Shapiro2000Book}), we collect admissible pairs $(n,N)$ in Table \ref{table:admissible}.

\renewcommand{\arraystretch}{1.8}
	\begin{table}[ht!]
		\label{table:admissible}
		\centering
		\begin{tabular}{|c||c|c|c|c|c|c|c|c|c|c|c|c|c|c|c|c|}
		\hline
		$n$ & $2$ & $4$ & $6$ & $8$ & $10$ & $12$ & $14$ & $16$ & $18$ & $20$ & $22$ & $24$ & $26$ & $28$ & $30$ & $32$
		\\
		\hline
		$N$ & $2$ & $3$ & $5$ & $5$ & $9$ & $9$ & $9$ & $9$ & $17$ & $17$ & $27$ & $27$ & $29$ & $33$ & $33$ & $33$
		\\
		\hline
	\end{tabular}

	\vspace*{1em}

	\begin{tabular}{|c||c|c|c|c|c|c|c|c|c|c|c|c|c|c|c|}
		\hline
		$n$ & $3$ & $5$ & $7$ & $9$ & $11$ & $13$ & $15$ & $17$ & $19$ & $21$ & $23$ & $25$ & $27$ & $29$ & $31$
		\\
		\hline
		$N$ & $5$ & $8$ & $12$ & $14$ & $20$ & $22$ & $24$ & $26$ & $36$ & $38$ & $50$ & $52$ & $56$ & $62$ & $64$
		\\
		\hline
	\end{tabular}

	\vspace{1em}
	\caption{Admissible pairs $(n,N)$ for $n=2,\dots,32$.}
	\end{table}
\vspace*{1em}

\begin{remark}
  \label{rem:asymptotics}
  Theorem~\ref{thm:N-hurwitz-even} and~\ref{thm:N-hurwitz-odd} give a good upper bound for the choice of~$N$ such that $(n,N)$ is admissible. It is interesting to know the asymptotic growth of $N_{\min}(n)$. This basically depends on the behavior of $t_{\min}(m,m)$.  Currently, to the best of our knowledge, the values $t_{\min}(m,m)$ are only known for small values of $n$. In 1898, Hurwitz showed that the triple $[n,n,n]$ is Hurwitz-admissible if and only if $n\in\set{1,2,4,8}$, see \cite{Hurwitz1963}. The best known asymptotic upper bound is $t_{\min}(m,m)\in O(m^2/\log (m))$. This asymptotic follows from the following observation: $[2k,2^k,2^k]$ is Hurwitz-admissible for~$k\geq 1$, see~\cite{Hurwitz1922} or \cite[page 4]{Shapiro2000Book}. This implies that $[2ak,2^k,a2^k]$ is Hurwitz-admissible for all~$a,k \geq 1$ by summation over the $a$ many bilinear maps $F_{2m,2^m}$. Now, choosing $a,k$ such that $2ak \approx 2^k \approx m$, gives $a \in O(\frac{m}{\log m})$ and $k \in O(m)$. This shows that $t_{\min}(m,m) \in O(\frac{m^2}{\log m})$. Using this, we can conclude  from Theorem~\ref{thm:N-hurwitz-even} and~\ref{thm:N-hurwitz-odd} that $N_{\min}(n) \in O(\frac{n^2}{\log n})$.
\end{remark}
We end this section by explicitly writing down some of the $p$-harmonic maps we obtained.
\begin{example}\label{ex:small_n}
	We give examples of $p$-harmonic maps for selected values of $n$ and all $p\in [1,\infty)$ based on Theorem \ref{thm:N-hurwitz-even} and \ref{thm:N-hurwitz-odd}. In the following, the exponent $\gamma$ is always given as in Theorem~\ref{thm:cond_for_h}, i.e., $\gamma = \frac{2n}{n-1}$ for $p=1$ and 
	\begin{align*}
		\gamma = \frac{p-n+\sqrt{(p-n)^2+8n(p-1)}}{2(p-1)}
	\end{align*}
	for $p>1$.
	\begin{enumerate}[label={(\roman{*})}]
		\item \textbf{$n=2$, $N=2$:} Choose 
		\begin{align*}
			u(x)= \abs{x}^{\gamma -2}
			\Bigg(\begin{matrix}
				x_1^2-x_2^2
				\\
				2x_1x_2
			\end{matrix}\Bigg)
			.
		\end{align*}
		\item \textbf{$n=3$, $N=5$:} Choose
		\begin{align*}
			u(x)=\tfrac{1}{2}\abs{x}^{\gamma -2}
			\begin{pmatrix}
				\sqrt{3}(x_1^2-x_2^2)
				\\
				x_1^2+x_2^2-2x_3^2
				\\
				2\sqrt{3}\,x_1x_2
				\\
				2\sqrt{3}\,x_1x_3
				\\
				2\sqrt{3}\,x_2x_3
			\end{pmatrix}
			.
		\end{align*}
		\item \textbf{$n=4$, $N=3$:} Choose 
		\begin{align*}
			u(x)=\abs{x}^{\gamma -2}
			\begin{pmatrix}
				x_1^2 + x_2^2 -x_3^2 - x_4^2
				\\
				2x_1x_3+2x_2x_4
				\\
				2x_1x_4-2x_2x_3
			\end{pmatrix}
			.
		\end{align*}
		\item \textbf{$n=5$, $N=8$:} Choose 
		\begin{align*}
			u(x)=\tfrac{1}{4}\abs{x}^{\gamma -2}
			\begin{pmatrix}
				\sqrt{15}(x_1^2+x_2^2-x_3^2-x_4^2)
				\\
				x_1^2+x_2^2+x_3^2+x_4^2-4x_5^2
				\\
				2\sqrt{15}(x_1x_3+x_2x_4)
				\\
				2\sqrt{15}(x_1x_4-x_2x_3)
				\\
				2\sqrt{10}x_ix_5 \quad \text{for $i=1,\dots,4$}
			\end{pmatrix}
			.
		\end{align*}
	\end{enumerate} 
\end{example}
\begin{remark}
  \label{rem:smallest}
  We verified by hand that our bounds for $N_{\min}(n)$ for $n=2,3,4$ cannot be improved, i.e., $N_{\min}(2)= 2$, $N_{\min}(3)=5$, $N_{\min}(4)=3$. However, we omit the tedious proof.
\end{remark}

\subsection{Analysis of our examples}\label{sec:analysis}

Let use repeat the observations about our examples as stated in the introduction. We are now able to justify these claims. Note that in this setting~$k=2$. We use the formulas~\eqref{eq:gamma}, \eqref{eq:gamma2} and~\eqref{eq:tau-k=2} for $\gamma$ and~$\tau$ and $n\geq 2$.
\begin{enumerate}[label={(B\arabic{*})}]
	\setlength{\itemsep}{0.7em}
	\item \label{itm:x-u2} The regularity of $\nabla u$ is decreasing in $p$, ranging from $C^{1,\frac{2}{n-1}}$ for $p\to 1$ to $C^0$ for $p\to \infty$.
	\item \label{itm:x-A2} The regularity of $A(\nabla u)$ is increasing in $p$, ranging from $C^{0}$ for $p\to 1$ to $C^{n+1}$ for $p=\infty$.
	\item \label{itm:x-u} For $p\leq 2$, $\nabla u\in C^1$ holds, but for every $\epsilon >0$ there exists an example such that $\nabla u \notin C^{1,\epsilon}$.
	\item \label{itm:x-A} For $p\geq 2$, $A(\nabla u) \in C^1$ holds for all our examples.
	\item \label{itm:x-V} For $p\in (1,2)$ and $n\geq 3$ we have $V(\nabla u)\notin C^1$. In particular, this disproves the conjecture from \cite{BalciDieningWeimar2020} mentioned above.
\end{enumerate}

\begin{proof}[Proof of \ref{itm:x-u2}-\ref{itm:x-V}]
	We begin with~\ref{itm:x-u2}. Standard calculations show that~$\gamma(p)$ is decreasing in~$p$. A simple way to see this is to differentiate the implicit equation \eqref{eq:gamma} in direction of $p$ to obtain
\begin{align*}
	\frac{d\gamma}{dp} (2\gamma(p-1)+n-p) + (\gamma^2 - \gamma)=0.
\end{align*}
Since $\gamma >1$ and $n\geq 2$ this proves $\frac{d\gamma}{dp}<0$. It is easy to see that $\gamma(p=1)=\frac{2n}{n-1}$ and $\gamma(p=\infty)=1$. This shows \ref{itm:x-u2}.

We continue with~\ref{itm:x-A2}. Let $a$ be the homogeneity of~$A(\nabla u)$. Then $a = (\gamma-1)(p-1) = \frac{\tau}{\theta} \geq 0$. From \eqref{eq:tau-k=2} we get
\begin{align}
	a^2\theta + a (1+\theta (n-2))-(1-\theta)(n+1)=0.
\end{align}
Differentiating this equation in direction of $\theta$ gives us
\begin{align*}
	\frac{d a}{d \theta} (2a\theta +1+\theta (n-2))+(a^2 +a(n-2)+(n+1))=0,
\end{align*}
which shows $\frac{da}{d\theta}<0$. Again, it is easy to see that $a(p=1)=0$ and $a(p=\infty)=n+1$.

To show \ref{itm:x-u}, we first note that \ref{itm:x-u2} and $\gamma(p=2)=2$ imply that $\gamma > 2$ for $p< 2$. Thus $\nabla u\in C^1$ holds in this range. If we consider increasing dimensions~$n$, then $\nabla u$ becomes less regular for $1\leq p \leq 2$ and more regular for $p \geq 2$. This can be observed in Figure~\ref{fig:n_to_infty}. For $n \to \infty$, the regularity of~$\nabla u$ expressed in $\tau = \frac{\gamma-1}{p'}$ converges to $\tau = 1-\theta$. Indeed, if we divide~\eqref{eq:tau-k=2} by~$\theta\,n$ and let $n \to \infty$, we obtain the limit equation $\tau  = 1-\theta$. This verifies \ref{itm:x-u}.

Observation \ref{itm:x-A} follows from \ref{itm:x-A2} together with $a(p=2)=(\gamma(p=2)-1)(2-1)=1$.

Finally, \ref{itm:x-V}, which was already observed in Figure \ref{fig:R2R2}, can be seen as follows. First we differentiate \eqref{eq:gamma} in direction of $n$ to get
\begin{align*}
	\frac{d\gamma}{dn}\cdot \left(2\gamma (p-1)+n-p\right)= 2-\gamma.
\end{align*}
For $p\in (1,2)$ (which implies $\gamma >2$), this shows $\frac{d\gamma}{dn}<0$. Thus, for fixed $p\in (1,2)$, the homogeneity $\nu\coloneqq (\gamma -1)\frac{p}{2}$ of $V(\nabla u)$ is monotonically decreasing in $n$, too. Let us thus consider the case $n=3$. Note that $\nu =\tau \frac{p' p}{2}=\frac{\tau}{2\theta(1-\theta)}$. Using \eqref{eq:tau-k=2} and $n=3$, we obtain
\begin{align*}
	\nu < 1 &\iff \frac{-1-\theta +\sqrt{1+18\theta-15\theta^2}}{2}\frac{1}{2\theta(1-\theta)}<1
	\\
	&\iff -1-\theta+\sqrt{1+18\theta -15\theta ^2}\leq 4 \theta - 4 \theta ^2 
	\\
	&\iff 0 \leq 2\theta^3-5\theta^2+4\theta -1,
\end{align*}
which holds true on the interval $\theta \in (\frac 12,1)$. Thus $V(\nabla u)\notin C^1$ whenever $n\geq 3$ and $p\in (1,2)$. This finishes the proof.

\end{proof}
\begin{figure}[ht!]
	\begin{tikzpicture}[scale=0.7]
		\begin{axis}[
			axis lines = middle,
			axis equal image,
			xmin=-0,
			xmax=1,
			ymin=0,
			ymax=1.1,
			xtick={0.001,1/2,1},
			xticklabels={$\frac{1}{\infty}$,$\frac{1}{2}$,$\frac{1}{1}$},
			ytick={0.001,0.5,1},
			yticklabels={0,$\frac{1}{2}$,1},
			ylabel = {\(\tau\)},
			x label style = {at={(axis description cs: 0.88,0)}, anchor = north},
			xlabel = {\(\theta=\frac{1}{p}\)},
			legend entries={Regularity for selected $n$,,,,,,$\tau=1-\theta$} 
			]
			\node[below left] at (axis cs:0,0.1) {0};
			\node[below left] at (axis cs:0.1,0) {$\frac{1}{\infty}$};
			\foreach \n in {2,8,32,128,512,2048}{
				\addplot [domain=0:1,samples=1000,thick]
				{(-1-x*(\n-2))*0.5+sqrt(0.25*(1+x*(\n-2))*(1+x*(\n-2))+x*(1-x)*(\n+1))};
			}
			\addplot [domain=0:1,samples=2,dashed,color=red,thick]
			{1-x};
		\end{axis}
	\end{tikzpicture}
	\caption{\small Regularity of our examples dimensions $2,8,32,128,512,2048$. The regularity is expressed in $\tau =(\gamma -1)/p'$ via $\tau ^2 + \tau (1+\theta (n-2))-\theta(1-\theta)(n+1)=0$. For $n\to \infty$ the functions approach $\tau = (1-\theta)$ (dashed).}
	\label{fig:n_to_infty}
\end{figure}
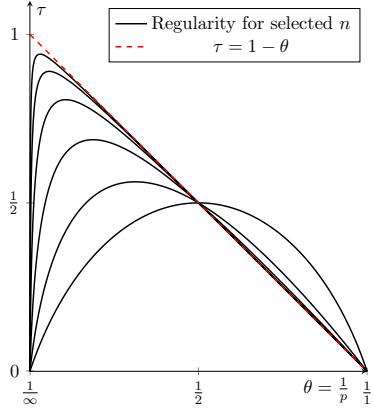


\subsection{Examples using higher order polynomials}\label{sec:higher_order}

While in the previous section we considered the case~$k=2$, we will now consider the case~$k \geq 2$. In particular, we will construct $k$-homogeneous polynomials~$h$ satisfying the assumption of Theorem~\ref{thm:cond_for_h}. Larger values of $k$ give more regular $p$-harmonic maps. We will restrict ourselves to the case of even dimension~$n$.

For our construction we need complex numbers and the fact that the real and imaginary part of harmonic functions are harmonic. This fact for multivariate complex functions is proved in the following lemma.
\begin{lemma}
  \label{lem:z-power-harmonic}
	Let $z=(z_1,\dots,z_m)$, $z_i=(x_{2i-1},x_{2i})$, then for any multi-index $\alpha=(\alpha_1,\dots,\alpha_m)$ the functions $\Re z^{\alpha}$ and $\Im z^{\alpha}$
	are harmonic in~$x$.
\end{lemma}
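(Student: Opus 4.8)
The plan is to identify each pair of real coordinates with a single complex variable, so that $z^\alpha$ becomes a product of one-variable holomorphic powers living in disjoint coordinate blocks, and then to exploit that the Laplacian splits along these blocks with no mixed terms.

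First I would set $z_i \coloneqq x_{2i-1} + \mathrm{i}\,x_{2i} \in \mathbb{C}$ for $i=1,\dots,m$, so that $z^\alpha = z_1^{\alpha_1}\cdots z_m^{\alpha_m}$ is a $\mathbb{C}$-valued function of $x=(x_1,\dots,x_{2m})\in\setR^{2m}$. Since the Laplacian $\Delta=\sum_{j=1}^{2m}\partial_{x_j}^2$ acts componentwise on real and imaginary parts, we have $\Delta\,\Re z^\alpha = \Re\,\Delta z^\alpha$ and $\Delta\,\Im z^\alpha = \Im\,\Delta z^\alpha$. Hence it suffices to prove that $z^\alpha$ is harmonic as a complex-valued function, i.e. $\Delta z^\alpha = 0$.

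Next I would decompose the Laplacian into block pieces $\Delta = \sum_{i=1}^m \Delta_i$, where $\Delta_i \coloneqq \partial_{x_{2i-1}}^2 + \partial_{x_{2i}}^2$ is the two-dimensional Laplacian in the $i$-th pair of variables. The key structural point is that in the product $z^\alpha = \prod_k z_k^{\alpha_k}$, only the factor $z_i^{\alpha_i}$ depends on $x_{2i-1}, x_{2i}$, while every other factor is constant with respect to these two variables. Therefore
\begin{align*}
  \Delta_i z^\alpha = \Big(\prod_{k\neq i} z_k^{\alpha_k}\Big)\,\Delta_i\big(z_i^{\alpha_i}\big).
\end{align*}

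It then remains to check $\Delta_i(z_i^{\alpha_i})=0$, which is exactly the statement that a holomorphic function is harmonic: writing $\Delta_i = 4\,\partial_{z_i}\partial_{\bar z_i}$ and using that $w\mapsto w^{\alpha_i}$ is holomorphic, i.e. $\partial_{\bar z_i} z_i^{\alpha_i}=0$, we obtain $\Delta_i z_i^{\alpha_i}=0$. Consequently $\Delta_i z^\alpha = 0$ for each $i$, and summing over $i$ yields $\Delta z^\alpha = 0$; taking real and imaginary parts finishes the proof. The argument is entirely elementary, so there is no serious obstacle; the only point deserving a moment's care is the clean block-separation of $\Delta$, which works precisely because the Laplacian contains no mixed second derivatives coupling different coordinate pairs, so the factors in disjoint blocks never interact.
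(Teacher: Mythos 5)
Your proof is correct and follows essentially the same route as the paper: both decompose the Laplacian into the two-dimensional block operators $\partial_{x_{2i-1}}^2+\partial_{x_{2i}}^2$, observe that only the factor $z_i^{\alpha_i}$ depends on the $i$-th pair of variables, and invoke holomorphy of the one-variable power to kill each block. The only (harmless) difference is presentational: you work with the complex-valued function $z^\alpha$ directly, commuting $\Delta$ with $\Re$ and $\Im$ and using the Wirtinger factorization, whereas the paper expands $\Re(z^\alpha)$ via the product formula for real parts before differentiating.
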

\begin{proof}
  Write $z=(z_1,\tilde{z})$ and $\tilde{\alpha} = (\alpha_2,\dots, \alpha_m)$. Then $z^\alpha = z_1^{\alpha_1}(\tilde{z})^{\tilde{\alpha}}$ and
	\begin{align}
		\operatorname{Re}(z^\alpha)=\operatorname{Re}(z_1^{\alpha_1})\operatorname{Re}(\tilde{z}^{\tilde{\alpha}})-\operatorname{Im}(z_1^{\alpha_1})\operatorname{Im}(\tilde{z}^{\tilde{\alpha}}).
	\end{align}
	Thus, because $\tilde{z}$ contains no $x_1$ and no $x_2$ and because $z_1^{\alpha_1}$ is holomorphic we get
	\begin{align}
		(\partial_{x_1}^2+\partial_{x_2}^2)\operatorname{Re}(z^\alpha)=0.
	\end{align}
  Similarly, we get $(\partial_{x_{2j-1}}^2+\partial_{x_{2j}}^2)\operatorname{Re}(z^\alpha)=0$ for $j=1,\dots, m$. Summing up we get $\Delta (\operatorname{Re}(z^\alpha))=0$. The proof for~$\Im(z^\alpha)$ is analogously.
\end{proof}
We can now construct our polynomials~$h$.
\begin{theorem}
  \label{thm:higher-order}
  Let $n \in \setN_{>0}$ be even and let $k \geq 2$. Then there exists a $k$-homogeneous polynomial $h \,:\, \RRn \to \RRN$ with $N=2 \binom{n/2+k-1}{k}$.
\end{theorem}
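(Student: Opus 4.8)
The plan is to identify $\RRn$ with $\mathbb{C}^m$, where $m = n/2$, via the complex coordinates $z_i = (x_{2i-1},x_{2i})$ as in Lemma~\ref{lem:z-power-harmonic}, so that $\abs{z}^2 = \abs{z_1}^2 + \dots + \abs{z_m}^2 = \abs{x}^2$. I would then take the components of $h$ to be suitably scaled real and imaginary parts of the degree-$k$ monomials $z^\alpha = z_1^{\alpha_1}\cdots z_m^{\alpha_m}$ ranging over multi-indices $\alpha \in \setN^m$ with $\abs{\alpha}=k$. Since there are exactly $\binom{m+k-1}{k} = \binom{n/2+k-1}{k}$ such multi-indices, taking both the real and the imaginary part of each yields the claimed $N = 2\binom{n/2+k-1}{k}$ components, each a real $k$-homogeneous polynomial in $x$.

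The harmonicity \hyperref[it:h1]{(h1)} is then immediate: by Lemma~\ref{lem:z-power-harmonic} every $\Re z^\alpha$ and $\Im z^\alpha$ is harmonic, and scaling by constants and stacking into a vector preserves this, so $\Delta h = 0$. The substance of the proof is the norm condition \hyperref[it:h2]{(h2)}, $\abs{h}^2 = \abs{x}^{2k}$. Here I would expand $\abs{x}^{2k} = \big(\abs{z_1}^2 + \dots + \abs{z_m}^2\big)^k$ by the multinomial theorem into $\sum_{\abs{\alpha}=k}\binom{k}{\alpha}\abs{z_1}^{2\alpha_1}\cdots\abs{z_m}^{2\alpha_m} = \sum_{\abs{\alpha}=k}\binom{k}{\alpha}\abs{z^\alpha}^2$, where $\binom{k}{\alpha} = k!/(\alpha_1!\cdots\alpha_m!)$ denotes the multinomial coefficient. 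Using $\abs{z^\alpha}^2 = (\Re z^\alpha)^2 + (\Im z^\alpha)^2$, this dictates the definition of $h$ with entries $\sqrt{\binom{k}{\alpha}}\,\Re z^\alpha$ and $\sqrt{\binom{k}{\alpha}}\,\Im z^\alpha$; then $\abs{h}^2 = \sum_{\abs{\alpha}=k}\binom{k}{\alpha}\big((\Re z^\alpha)^2 + (\Im z^\alpha)^2\big) = \abs{x}^{2k}$ exactly, as required by Theorem~\ref{thm:cond_for_h}.

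I do not expect a serious obstacle: the construction rests only on the multinomial identity together with the clean splitting of $\abs{z^\alpha}^2$ into the squares of its real and imaginary parts. The main point requiring care is the bookkeeping — matching each monomial $z^\alpha$ with its multinomial weight $\binom{k}{\alpha}$ so that $\abs{h}^2$ telescopes precisely to $(\abs{z_1}^2+\dots+\abs{z_m}^2)^k$ rather than some other symmetric combination, and verifying that the number of multi-indices of length $m=n/2$ summing to $k$ is exactly $\binom{n/2+k-1}{k}$ (noting that the identity remains valid even if some $\Im z^\alpha$ vanishes identically, such components simply contributing zero). With $\gamma = \gamma(p,k,n)$ chosen as in Theorem~\ref{thm:cond_for_h}, the map $u(x) = \abs{x}^{\gamma-k}h(x)$ is then $p$-harmonic, giving more regular examples as $k$ increases.
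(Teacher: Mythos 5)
Your proposal is correct and follows essentially the same route as the paper's proof: complexify via $z_j = x_{2j-1}+ix_{2j}$, expand $\abs{x}^{2k} = (\abs{z_1}^2+\dots+\abs{z_m}^2)^k$ by the multinomial theorem, split each $\abs{z^\alpha}^2$ into $(\Re z^\alpha)^2+(\Im z^\alpha)^2$, and invoke Lemma~\ref{lem:z-power-harmonic} for harmonicity. If anything, you are more careful than the paper, which states the components of $h$ without making the scaling factors $\sqrt{\binom{k}{\alpha}}$ explicit (and contains minor typos in the multinomial expansion); your bookkeeping of these weights is exactly what makes condition \hyperref[it:h2]{(h2)} come out as an identity.
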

\begin{proof}
  Let $n=2m$. We change to complex number using the representation $z_j = x_{2j-1} + i x_{2j}$ for $j=1,\dots,m$. Moreover, let $z = (z_1,\dots, z_m) \in \setC^m$ and $\abs{z}^2 = \abs{z_1}^2 + \dots + \abs{z_m}^2$.
  Then
  \begin{align*}
    \abs{x}^{2k} &= \abs{z}^{2k} = \big( \abs{z_1}^2 + \dots + \abs{z_m}^2\big)^k
    = \sum_{\abs{\alpha}=j} \binom{m}{\alpha} \prod_{j=1}^m \abs{z_j}^{2\alpha_j}
    = \sum_{\abs{\alpha}=j} \binom{m}{\alpha} \biggabs{ \prod_{j=1}^m z_j^{\alpha_j}}^2.
    \\
    &=  \sum_{\abs{\alpha}=j} \binom{m}{\alpha} \Bigg(\Re\bigg( \prod_{j=1}^m z_j^{\alpha_j}\bigg) \Bigg)^2 + \sum_{\abs{\alpha}=j} \binom{m}{\alpha} \Bigg(\Im\bigg( \prod_{j=1}^m z_j^{\alpha_j}\bigg) \Bigg)^2,
  \end{align*}
  where $\Re$ and $\Im$ denote the real and imaginary part. As components of~$h$ we take the real and imaginary parts of the $\prod_{j=1}^m z_j^\alpha$. Due to Lemma \ref{lem:z-power-harmonic} these are harmonic. This concludes the proof.
\end{proof}
\begin{remark}
  \label{rem:higherorder-n=2}
  If $n=2$, then $N=2 \binom{n/2+k-1}{k} = 2$. Thus, we found in Theorem~\ref{thm:higher-order} a family of $p$-harmonic functions $u_k\,:\, \RR^2 \to \RR^2$ with regularity and homogeneity given by~\eqref{eq:gamma2}. This is similar to the family of $p$-harmonic functions of higher order constructed in \cite{Aronsson89representation}. Unfortunately, if $n \geq 3$, then $N$ depends also on~$k$.
\end{remark}


\subsection{\texorpdfstring{Examples of $\infty$-harmonic maps}{Examples of infinity-harmonic maps}}\label{sec:infinity_laplacian}

It was observed in~\cite{BhattacharyaDiBenedettoManfredi89} that the homogeneous $p$-Laplacian equation turn for $p \to \infty$ into the $\infty$-Laplacian equation. Indeed, if we start with the $p$-Laplace operator applied to a function $u\,:\, \RRn \to \RRN$, then
\begin{align*}
  \Delta_p u &= \divergence \big( \abs{\nabla u}^{p-2} \nabla u\big) = (p-2) \abs{\nabla u}^{p-4} \Delta_\infty u + \abs{\nabla u}^{p-2} \Delta u,
\end{align*}
where
\begin{align}
  \label{eq:Delta-infty}
  \Delta_\infty u &= \tfrac 12 \sum_{j,k} \partial_j (\abs{\nabla u}^2) \partial_j u.
\end{align}
Now, formally dividing the equation $-\Delta_p u = 0$ by $(p-2)\abs{\nabla u}^{p-4}$ and passing to the limit~$p\to \infty$, we arrive at
\begin{align*}
  -\Delta_\infty u &= 0.
\end{align*}
Another way to obtain~$\Delta_\infty$ at least for $N=1$ is to look at local minimizers of the functional $\norm{\nabla u}_\infty$, see~\cite{Aronsson1967extension}. For the infinity Laplacian the concept of solution is delicate. Most often, viscosity solutions are used, see \cite{Lindqvist16}. If $n=2$ and $N=1$, then $\infty$-harmonic functions are~$C^{1,\alpha}$, see~\cite{EvansSavin08}. They are conjectured to be in $C^{1,\frac 13}$, which is the regularity of the $\infty$-harmonic function $x^{4/3} -y^{4/3}$.
There is also some literature on $\infty$-harmonic maps, i.e. $N \geq 2$. We refer to~\cite{OuTroutmanWilhelm2012,Katzourakis2017} and its references.

It turns out that are examples of Theorem~\ref{thm:cond_for_h} are for $p=\infty$ in fact solutions to~$-\Delta_\infty u=0$.
\begin{theorem}
  \label{thm:Delta-infty}
  Let $n \geq 2$ and let~$h$ and~$k$ be as in Theorem~\ref{thm:cond_for_h}. Then $u(x)= \abs{x}^{1-k} h(x)$ solve $\Delta_\infty = 0$ on~$\RRn$ in the sense of~\eqref{eq:Delta-infty}. Moreover, $\abs{\nabla u} = 1+k(k+n-2)$ and $u \in W^{1,\infty}_{\loc}(\RRn)$.
\end{theorem}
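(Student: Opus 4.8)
The plan is to observe that in the infinity case the ansatz degenerates to the exponent $\gamma = 1$, which makes $\abs{\nabla u}$ \emph{constant}; once this is established, the vanishing of $\Delta_\infty u$ is immediate from the definition~\eqref{eq:Delta-infty}. First I would record that $u(x) = \abs{x}^{1-k}h(x)$ is exactly the map of Theorem~\ref{thm:cond_for_h} with $\gamma = 1$, consistent with the limit $\lim_{p\to\infty}\gamma(p,k,n) = 1$ noted after that theorem. Consequently the gradient formula~\eqref{eq:grad-u} and the norm computation~\eqref{eq:abs-nabla-u} apply verbatim. Specializing~\eqref{eq:abs-nabla-u} to $\gamma = 1$ collapses the homogeneity factor $\abs{x}^{2(\gamma-1)}$ to $1$ and yields $\abs{\nabla u}^2 = \gamma^2 + k(k+n-2) = 1 + k(k+n-2)$ on $\RRn \setminus \set{0}$. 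This already yields the constant value of $\abs{\nabla u}^2$ recorded in the statement and shows in particular that $\nabla u$ is bounded.

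The second step is the $\infty$-harmonicity. Since $\abs{\nabla u}^2$ is the constant $1 + k(k+n-2)$ on $\RRn \setminus \set{0}$, all of its partial derivatives vanish, i.e. $\partial_j(\abs{\nabla u}^2) = 0$ for every $j$. Inserting this into~\eqref{eq:Delta-infty} gives $\Delta_\infty u = \tfrac12 \sum_j \partial_j(\abs{\nabla u}^2)\,\partial_j u = 0$ pointwise on $\RRn \setminus \set{0}$, where $u$ is smooth. No further computation is required: the whole content of the argument is that an eikonal-type identity $\abs{\nabla u} \equiv \text{const}$ forces the infinity Laplacian to vanish wherever $u$ is twice differentiable.

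For the regularity claim I would argue as in the proof of Theorem~\ref{thm:cond_for_h}: the map $u$ is $1$-homogeneous (because $\gamma = 1$), continuous with $u(0) = 0$, smooth on $\RRn \setminus \set{0}$, and has a weak gradient given by~\eqref{eq:grad-u} of constant Frobenius norm. Hence $u$ is Lipschitz on bounded sets and $u \in W^{1,\infty}_{\loc}(\RRn)$, exactly as in the $\gamma \geq 1$ discussion at the end of that proof.

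The only genuinely delicate point is the behaviour at the origin, and this is where I expect the main obstacle to lie. Away from $0$ the argument above is completely rigorous, but at $x = 0$ the gradient $\nabla u$ is merely $0$-homogeneous, hence direction-dependent, and $u$ is not twice differentiable, so the expression~\eqref{eq:Delta-infty} has no classical meaning there. To justify the phrase ``on $\RRn$'' I would either note that $\set{0}$ is a single point off which the identity holds, or, if the intended notion of solution demands it, verify the equation at the origin in the viscosity sense, checking that admissible test maps touching $u$ at $0$ satisfy the correct differential inequalities, using again the constancy of $\abs{\nabla u}$ together with the $1$-homogeneity of $u$. Settling this point carefully is the one part of the proof that goes beyond the immediate specialization of the computations in Theorem~\ref{thm:cond_for_h}.
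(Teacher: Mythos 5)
Your proposal is correct and takes essentially the same route as the paper's proof: specialize \eqref{eq:abs-nabla-u} to $\gamma = 1$ to get $\abs{\nabla u}^2 \equiv 1+k(k+n-2)$, conclude $\Delta_\infty u = 0$ from \eqref{eq:Delta-infty} because the derivative of a constant vanishes, and obtain $u \in W^{1,\infty}_{\loc}(\RRn)$ from the $1$-homogeneity. The delicate point you raise about the origin is resolved by the constancy itself, with no viscosity argument needed: since $\abs{\nabla u}^2$ agrees almost everywhere with a constant, $\partial_j(\abs{\nabla u}^2) = 0$ weakly on all of $\RRn$, so the expression in \eqref{eq:Delta-infty} vanishes on all of $\RRn$ in the natural (weak/a.e.) sense, which is exactly the sense intended by the theorem.
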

\begin{proof}
  Let $\gamma=\gamma(p,k,n)$ be as in Theorem~\ref{thm:cond_for_h}. Then $\lim_{p \to \infty} \gamma(p,k,n) = 1$. Thus, by~\eqref{eq:abs-nabla-u} we have $\abs{\nabla u}^2 = 1+k(k+n-2)$. In particular, $\abs{\nabla u}^2$ is constant. This immediately implies that $\Delta_\infty u = 0$ and $u \in W^{1,\infty}_{\loc}(\RRn)$. Note that $u$ is 1-homogeneous.
\end{proof}
Let us write down the example for $n=N=2$ and $k=2$ implicitly:
\begin{align*}
  u(x) &= \abs{x}^{-1} 
  \Bigg(\begin{matrix}
    x_1^2-x_2^2
    \\
    2x_1x_2
  \end{matrix}\Bigg)
  .
\end{align*}
This function is just in~$W^{1,\infty}_{\loc}(\RRn)$ but not in~$C^1(\RRn)$. Thus, our example is more irregular then the $\infty$-harmonic map $\phi(x_1,x_2,x_3,x_4) = (x_1^{4/3} - x_2^{4/3},x_3^{4/3} -x_4^{4/3}) \in C^{1,\frac 13}(\RRn)$  given in~\cite[Example 4.5]{OuTroutmanWilhelm2012}.


\printbibliography
\end{document}